\newcommand{\mcal}{\mathcal}
\newcommand{\q}{\left\{}
\newcommand{\w}{\right\}}
\newcommand{\re}{\mathbb{R}}
\newcommand{\cn}{\mathbb{C}}
\newcommand{\tb}{\textbf}
\newcommand{\ti}{\textit}
\newcommand{\bt}{\beta}
\newtheorem{theorem}{Theorem}[section]
\newtheorem{lemma}[theorem]{Lemma}
\newtheorem{proposition}[theorem]{Proposition}
\theoremstyle{definition}
\theoremstyle{remark}
\newtheorem{remark}[theorem]{Remark}
\numberwithin{equation}{section}
\newcommand{\opn}{\operatorname}
\newcommand{\rank}{\opn{rank}\;}
\newcommand{\supp}{\opn{supp}\;}
\newcommand{\av}{\mathcal{V}}
\newcommand{\V}{\mathcal V}
\begin{document}

\setcounter{page}{1}

\title[The Nonsingular Cubic Binary Moment Problem]{A New Approach to the Nonsingular Cubic Binary Moment Problem}

\author[R. Curto, \MakeLowercase{and} S. Yoon]{Ra\'ul E. Curto,$^1$$^{*}$ \MakeLowercase{and} Seonguk Yoo$^2$}

\address{$^{1}$Department of Mathematics, The University of Iowa, Iowa City, Iowa 52242, USA.}
\email{\textcolor[rgb]{0.00,0.00,0.84}{raul-curto@uiowa.edu}}

\address{$^{2}$Department of Mathematics, Sungkyunkwan University, Suwon, 16419, Korea.}
\email{\textcolor[rgb]{0.00,0.00,0.84}{seyoo73@gmail.com}}


\let\thefootnote\relax\footnote{Copyright 2016 by the Tusi Mathematical Research Group.}

\subjclass[2010]{Primary 47B35, 15A83, 15A60, 15A48; Secondary 47A30, 15-04.}

\keywords{cubic moment problem, recursively determinate moment matrix, flat extension, semidefinite programming.}

\date{Received: xxxxxx; Revised: yyyyyy; Accepted: zzzzzz.
\newline \indent $^{*}$Corresponding author}

\begin{abstract}
We present an alternative solution to nonsingular cubic moment problems, using techniques that are expected to be useful for higher-degree truncated moment problems. \ In particular, we apply the theory of recursively determinate moment matrices to deal with a case of rank-increasing moment matrix extensions.      
\end{abstract} \maketitle

\section{Introduction}\label{intro}

Given a doubly indexed finite sequence of real numbers $\beta \equiv\beta^{(m)}= \{  \beta_{00},$ $\beta_{10},$ \newline $\beta_{01},\cdots, \beta_{m,0},\beta_{m-1,1},\cdots, \beta_{1,m-1},\beta_{0,m} \}$ with $\beta_{00}>0$, the \ti{truncated real moment problem} (TRMP) entails seeking necessary and sufficient conditions for the existence of  a positive Borel measure $\mu$ supported in the real plane $\mathbb{R}^2$ such that
\begin{eqnarray*}
\beta_{ij}=\int x^i y^j \,\, d\mu \,\,\,(i,j\in \mathbb Z_+, \ 0\leq i+j\leq m).\ 
\end{eqnarray*}
When such a measure exists, we say that $\mu$ is a \ti{representing measure} for $\beta$ and that the TRMP is \ti{soluble}.  

There is a parallel \ti{truncated complex moment problem} (TCMP), for a finite sequence of complex numbers $\gamma \equiv \gamma ^{(m)}:\gamma 
_{00},\gamma _{01},\gamma _{10},\cdots ,$ $\gamma _{0,m},$\ $\gamma
_{1,m-1},\cdots ,\gamma _{m-1,1},$ $\gamma _{m,0},$ with $\gamma _{00}>0$
and $\gamma _{ji}=\bar{\gamma}_{ij}.$ \ Here TCMP consists of finding a positive Borel measure $\mu $
supported in the complex plane $\mathbb{C}$ such that $\gamma _{ij}=\int
\bar{z}^{i}z^{j}\;d\mu \;\;\;(0\leq i+j\leq m).\ $ 
It is well-known that TRMP are TCMP are equivalent for an even integer $m$ (cf. \cite[Proposition 1.12]{tcmp6}), and hence any techniques developed for TCMP are transferable to TRMP. \  Both problems are simply referred to as the \ti{truncated moment problem}  (TMP). \  

In a series of papers, for the case when $m=2d$, the first named author and L.A. Fialkow found solutions for various truncated moment problems; for instance, they obtained complete solutions for $m=2$ and $m=4$ (cf. \cite{tcmp1, tcmp6, FiNi10, CuYoo2}). \ Some  solutions are based on matrix positivity and extension, combined with a ``functional calculus'' (to be briefly discussed in Section \ref{s-prel}) for the columns of the associated moment matrix. \ This matrix is defined as follows. \ For a real moment sequence $\beta^{(2d)}$ of even degree, the \textit{moment matrix} $\mcal M(d) \equiv\mathcal{M}(d)(\beta^{(2d)})$ is given by
$$
\mathcal{M}(d)(\beta^{(2d)}):=(\beta_{\textbf{ i} +\textbf{j} })_{\textbf{ i}, \, \textbf{j}\in \mathbb Z^2_+: |\textbf{i}|,\, |\textbf{j}|\leq 2d}.\ 
$$
If we label the columns of $\mathcal M{(d)}$ with the degree lexicographical order, $\emph{1},X,Y,X^2,$ $X Y,Y^2, \newline \cdots,X^d,\cdots,Y^d$, we can then use the functional calculus for columns of $\mathcal{M}(d)$, introduced in \cite{tcmp1}. \ The moment matrix $\mathcal{M}(d)$ is Hankel by rectangular blocks; for instance, 
\begin{equation}
\mathcal{M}(2) \equiv 
\begin{pmatrix}
\beta{00} & | &\beta{10}& \beta{01} & | &\beta{20} & \beta{11} & \beta{02} \\
--&-&--&--&-&--&--&-- \\ 
\beta{10} & | &\beta{20}& \beta{11} & | &\beta{30} & \beta{21} & \beta{12} \\
\beta{01} & | &\beta{11}& \beta{02} & | &\beta{21} & \beta{12} & \beta{03} \\
--&-&--&--&-&--&--&-- \\
\beta{20} & | &\beta{30}& \beta{21} & | &\beta{40} & \beta{31} & \beta{22} \\
\beta{11} & | &\beta{21}& \beta{12} & | &\beta{31} & \beta{22} & \beta{13} \\
\beta{02} & | &\beta{12}& \beta{03} & | &\beta{22} & \beta{13} & \beta{04}
\end{pmatrix}.
\end{equation}
   
When $m = 2d + 1$, a general solution to partial cases of the TMP can be found in  \cite{Kim11} and \cite{KiWo11} as well as a solution to the \ti{truncated matrix moment problem}; a solution to the \ti{cubic complex moment problem} (when $m=3$)  was given in \cite{Kim11}. \ We know from \cite[Proposition 1.12]{tcmp6} that the complex and real truncated moment problems are equivalent, in the sense that there exists a bridging map that allows one to translate the hypotheses and conclusions for TCMP into similar hypotheses and conclusions for TRMP, and vice versa. \  Our approach here, however, focuses on the real cubic moment problem, and analyzes it in its own right.

In this note we consider  cubic \emph{real} moment problems and present an alternative solution to the ``nonsingular'' case (i.e., $\mathcal{M}(1)$ invertible; see Section \ref{s-cmp} for the formal definition). \ Our idea is to extend the initial data $\beta^{(3)}$ to an even-degree $\beta^{(4)}$, for which the associated moment matrix $\mathcal{M}(2)$ has rank $3$ or $4$. \ We then prove that $\mathcal{M}(2)$ is a flat extension of $\mathcal{M}(1)$, or is a flat extension of a $4\times4$ submatrix, or admits a flat extension $\mathcal{M}(3)$. \ In all three cases we find a finitely atomic representing measure for $\beta^{(3)}$.

We anticipate that the present work will contribute to our understanding of higher-degree moment problems, beginning with the quintic moment problem. \ We also expect that solutions to odd-degree moment problems will be applied to solve the subnormal completion problem studied in \cite{CLY}. \


\section{Preliminaries} \label{s-prel}

When we build a moment matrix $\mcal M(2)$ out of a cubic finite sequence, the lower right-hand $3\times3$ block will include all quartic moments, which will need to remain undefined. \ To obtain our main results, we will choose appropriate quartic moments and show that $\mcal M(2)$ has a representing measure. \ In order to describe this process in detail, we need to review basic TMP notation and results pertaining to the even-degree case. \  

\subsection*{\textbf{Necessary Conditions}\label{NCS}}
In order to discuss basic necessary conditions for the existence of a measure, let $\mu$ be a representing measure of the  even-degree moment sequence $\beta\equiv\beta^{(2d)}$. \  First, we recall that
\begin{eqnarray*}
0 \leq \int| p(x,y)|^2 \, d\mu
 = \sum_{i,j,k,l} a_{ij}a_{kl} \int x^{i+l} y^{j+k} \, d\mu
 = \sum_{i,j,k,l} a_{ij} a_{kl} \beta_{i+l} \beta_{j+k}
\end{eqnarray*}
if and only if $\mathcal{M}(d) \geq 0$. \ 

Let $\mcal P_k$ denote the set of bivariate polynomials in $\re[x,y]$ whose degree is at most $k$ and let  $\mcal C_{\mcal M(d)}$ denote the column space of $\mcal M(d)$. \  For $k \le d$, we now define an assignment from $\mathcal P_k$ to $\mathcal C_{\mathcal{M}(d)}$; given a polynomial $p(x,y) \equiv \sum_{ij}a_{ij}x^{i}y^{j}$, we let $p(X,Y):=\sum_{ij}a_{ij}X^{i}Y^{j}$ (so that $p(X,Y
)\in \mathcal{C}_{\mathcal{M}(d)}$), which defines the above mentioned functional calculus. \ 
We also let $\mathcal{Z}(p)$ denote the zero set of $p$ and define the \textit{algebraic variety} of $\beta $ by
\begin{equation} \label{variety}
\mathcal{V} \equiv \mathcal{V}(\beta ):=\bigcap {}_{p\, (x,y)=0,\, \deg \, p\, \leq d}\; \mathcal{Z}(p). \
\end{equation}
If $\widehat{p}$ denotes the column vector of coefficients of $p$, then we know that $p(X,Y)=\mathcal{M}(d)\widehat{p}$; as a consequence, $p(X,Y)=\mathbf{0}$ if and only if $\widehat{p} \in \opn{ker}\; \mathcal{M}(d)$. \  
Another  necessary condition we will use is $\opn{supp}\; \mu \subseteq \mathcal{V}(\beta )$ and
$r:=\opn{rank}\; \mathcal{M}(d)\leq \opn{card}\;\opn{supp}\;\mu \leq v:= \opn{card}\V$; this condition is called the \textit{variety condition} \cite{tcmp1}. \ 
In addition, if $p$ is any polynomial of degree at most $2d$ such that $p|_{\mathcal{V}}\equiv 0$, then the \textit{Riesz functional} $\Lambda $ must satisfy $
\Lambda (p):=\int p \, d\mu =0$, which is referred to as \textit{Consistency} of the moment sequence. \ 
The main results in \cite{tcmp11} state that the above mentioned conditions together with Consistency are sufficient for solubility in the {extremal} case ($r=v$); moreover, in \cite{tcmp11} the authors showed that Consistency cannot be replaced by the weaker condition that $\mathcal{M}(d)$ is \textit{recursively generated}, (RG), that is, $p(X,Y)=\mathbf{0} \Rightarrow (pq)(X,Y)=\mathbf{0}$, for each polynomial $q$ with $\deg (pq)\leq d$. \ 

In summary, positive semidefiniteness alone is sufficient to solve the quadratic moment problem ($d=1$) \cite{tcmp1}. \ However, for $d>1$, the solubility of TMP requires more. \ For instance, the solution of the quartic moment problem ($d=2$) requires positive semidefiniteness, the variety condition, and the (RG) property (this last property requires that the moment matrix be recursively generated; cf. \cite{tcmp1}, \cite{tcmp6}, \cite{FiNi10}). \

\subsection*{Flat Extensions}

We recall that $\mathcal{M}(d)$ is said to be \ti{flat} if $\rank \mcal M(d)=\rank \mcal M(d-1)$, that is, $\mcal M(d)$ is a rank-preserving positive extension of $\mathcal M(d-1)$; in this case, $\mcal M(d)$ has a unique  $\opn{rank}\; \mathcal{M}(d)$-atomic measure. \  
Furthermore, it is known that if $\mathcal{M}(d)$ has a  positive  extension $\mathcal M(d+k)$ for some $k\in \mathbb Z_+$, which in turn admits a flat extension  $\mathcal M(d+k+1)$, then $\beta$ has a $\opn{rank}\; \mathcal{M}(d+k)$-atomic measure \cite[Theorem 1.5]{tcmp3}. \  
This result is referred to as the Flat Extension Theorem; it is probably the most efficient, concrete solution to TMP, even though the construction of an extension is usually  difficult for a high-degree TMP. \
  
We will use the Flat Extension Theorem in the proof of our main result; thus, we need to briefly describe the process for building a flat extension. \ Since a moment matrix extension $\mathcal{M}(d+1)$ of $\mathcal{M}(d)$ can be written as $\mathcal M(d+1)=\left(\begin{array}{cccc} \mathcal{M}(d) & B(d+1) \\ B(d+1)^{*} & C(d+1) \end{array} \right)$, for some rectangular matrices $B(d+1)$ and $C(d+1)$, we can adapt a classical result given by J.L. Smul'jan in the search for a positive $\mathcal M(d+1)$.\ 

\begin{theorem}
\label{smul}(Smul'jan's Theorem \cite{Smu}) Let $A,B,C$ be matrices of complex numbers, with $A$ and $C$ square matrices.\ Then
\begin{equation*}
\tilde A :=\left(
\begin{array}{cc}
A & B \\
B^{\ast } & C%
\end{array}%
\right) \geq 0 \iff \left\{
\begin{array}{c}
A\geq 0 \\
B=AW \; (\textrm{for some~} W)\\
C\geq W^{\ast }AW%
\end{array}%
\right. .
\end{equation*}
Moreover, $\rank \tilde A=\rank A \iff C=W^{\ast }AW.$\ 
\end{theorem}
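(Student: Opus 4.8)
The plan is to base everything on a single block congruence (a block-$LDL\pb$ factorization, or ``completing the square''). The key observation is that, whenever $B=AW$ for some matrix $W$, one has the identity
\begin{equation*}
\begin{pmatrix} A & B \\ B\pb & C \end{pmatrix}
= \begin{pmatrix} I & 0 \\ W\pb & I \end{pmatrix}
  \begin{pmatrix} A & 0 \\ 0 & C-W\pb A W \end{pmatrix}
  \begin{pmatrix} I & W \\ 0 & I \end{pmatrix},
\end{equation*}
which follows by multiplying out the right-hand side and using $B\pb=W\pb A\pb=W\pb A$ (valid once we know $A=A\pb$). Since the triangular factor $L:=\begin{pmatrix} I & 0 \\ W\pb & I \end{pmatrix}$ is invertible with $L\pb=\begin{pmatrix} I & W \\ 0 & I \end{pmatrix}$, this writes $\tilde A$ as $L\,D\,L\pb$, where $D:=\begin{pmatrix} A & 0 \\ 0 & C-W\pb A W\end{pmatrix}$. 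Congruence by an invertible matrix preserves both positivity and rank, so once the factorization is available, every assertion of the theorem should follow from the corresponding property of the block-diagonal matrix $D$.

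For the implication $(\Leftarrow)$, I would assume $A\geq 0$, $B=AW$, and $C\geq W\pb A W$; then $D\geq 0$, since it is block diagonal with positive semidefinite blocks, and hence $\tilde A=L\,D\,L\pb\geq 0$. For $(\Rightarrow)$, assuming $\tilde A\geq 0$, I would first read off $A\geq 0$ (and $A=A\pb$) by testing $\tilde A$ against vectors of the form $(x,0)$. The substantive point is to produce $W$, i.e.\ to show that $B=AW$ is solvable, which amounts to the range inclusion $\ran B\sbs \ran A$. Granting this, each column of $B$ lies in $\ran A$, a solution $W$ exists, the factorization applies, and $D\geq 0$ forces its lower-right block to satisfy $C-W\pb A W\geq 0$, i.e.\ $C\geq W\pb A W$.

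The main obstacle is precisely the range inclusion $\ran B\sbs\ran A$, and this is where positivity (not mere Hermitian symmetry) is essential. I would argue as follows: since $A=A\pb\geq 0$ we have $\ran A=(\ker A)^{\perp}$, so the desired inclusion is equivalent to $\ker A\sbs\ker B\pb$. Given $x\in\ker A$, set $v:=(x,0)$; then $v\pb\tilde A v=x\pb A x=0$, and for a positive semidefinite matrix the vanishing of the quadratic form forces $\tilde A v=0$ (write $\tilde A=T\pb T$, so that $0=\|Tv\|^2$ gives $Tv=0$ and hence $\tilde A v=T\pb T v=0$). Reading off the lower block of $\tilde A v=0$ yields $B\pb x=0$, that is, $x\in\ker B\pb$. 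This establishes the inclusion and hence the existence of $W$.

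Finally, the rank statement drops out of the same factorization: because $L$ is invertible, $\rank\tilde A=\rank D=\rank A+\rank(C-W\pb A W)$, the last equality holding since $D$ is block diagonal. Thus $\rank\tilde A=\rank A$ if and only if $\rank(C-W\pb A W)=0$, i.e.\ $C-W\pb A W=0$, which is the claim $C=W\pb A W$. I expect the only delicate step to be the passage from $v\pb\tilde A v=0$ to $\tilde A v=0$; everything else is a mechanical consequence of the congruence.
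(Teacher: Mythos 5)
Your proof is correct and complete: the block congruence $\tilde A = L D L^{\ast}$ with $D=\operatorname{diag}(A,\,C-W^{\ast}AW)$, the range-inclusion argument $\ker A\subseteq\ker B^{\ast}$ via the vanishing quadratic form (using $\tilde A=T^{\ast}T$), and the rank count through the invertible factor $L$ are all sound, and this is the standard Schur-complement proof of Smul'jan's theorem. Note that the paper itself offers no proof to compare against: it states the result as a classical theorem with a citation to Smul'jan's 1959 paper and uses it as a black box, so your argument in fact supplies what the paper omits. One small point you could add for polish: if $AW_1=AW_2=B$, then the columns of $W_1-W_2$ lie in $\ker A$, whence $W_1^{\ast}AW_1=W_2^{\ast}AW_2$; this shows $W^{\ast}AW$ is independent of the choice of $W$, so the conditions $C\geq W^{\ast}AW$ and $C=W^{\ast}AW$ are unambiguous.
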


\begin{remark}
When the extension $\tilde A$ in Theorem \ref{smul} has the same rank as $A$, we say that $\tilde A$ is a flat extension of $A$.\ 
Besides satisfying this theorem, an extension $\mathcal M(d+1)$ must maintain the moment matrix structure, that is, the $C$-block must be  Hankel. \ This condition makes generating flat extensions quite difficult in many instances. \  
\end{remark}

We now discuss how we can find an explicit formula for a representing measure. \  
Suppose $\mathcal{M}(d)$ admits a  positive  extension $\mathcal M(d+k)$ for some $k\in \mathbb Z_+$ that has a flat extension  $\mathcal M(d+k+1)$. \ Thus, $\beta$ has a $\opn{rank}\; \mathcal{M}(d+k)$-atomic measure $\mu$ and let $r:=\opn{rank}\; \mathcal{M}(d+k)$. \ 
The flat extension theorem says that the algebraic variety $\av$ of $\mcal M(d+k+1)$  consists of exactly $r$ points and we may write $\av=\q (x_1,y_1),\ldots,(x_r,y_r)\w$. \ 
Denote the Vandermonde matrix $V$ as
\begin{equation}
V=\begin{pmatrix}
1&x_1&y_1& x_1^2&x_1 y_1 & y_1^2& \cdots & x_1^{d+k}& \cdots & y_1^{d+k}\\
\vdots&\vdots&\vdots&\vdots&\vdots&\vdots&\vdots&\vdots&\vdots&\vdots\\
1&x_r&y_r& x_r^2&x_r y_r & y_r^2& \cdots & x_r^{d+k}& \cdots & y_r^{d+k}\\
\end{pmatrix}
\end{equation}
If $\mcal B:=\q \tb{t}_1,\ldots, \tb{t}_r  \w$ is the basis for the column space of $\mcal M(d+k)$ and if $V_{\mcal B}$ is the submatrix of $V$ with columns labeled as in $\mcal B$, then we can find the densities by solving: 
\begin{equation}
V_{\mcal B}^T 
\begin{pmatrix}
\rho_1 & \rho_2 &\cdots & \rho_r
\end{pmatrix}^T
=\begin{pmatrix}
\Lambda(\tb{t}_1)&\Lambda(\tb{t}_2)&\cdots& \Lambda(\tb{t}_r) 
\end{pmatrix}^T.\ 
\end{equation}
Finally, we have  $\mu=\sum_{k=1}^r \rho_k \delta_{(x_k,y_k)}.$\   

\subsection*{Degree-One Transformations}

We briefly review a tool that will allow us to convert the given moment problem into a simpler one; this tool is known as the invariance of moment problems under degree-one transformations. \ The complex version is provided in \cite{tcmp6}; we   adapt the notation in \cite{tcmp6} to obtain a real version. 

For $a,b,c,d,e,f\in \mathbb{R}$ with $bf\neq ce$, let $\Psi(x,y)\equiv (\Psi_1(x,y),\Psi_2(x,y)):=(a+bx+cy,d+ex+fy)$ for $x,y\in\re$. \  
If $\Lambda_\beta$ denotes the Riesz functional associated with $\beta$, then 
given $\beta \equiv \beta^{(2d)}$ we build a new (equivalent) moment sequence $\tilde\beta \equiv \tilde\beta^{(2d)} \equiv \q \tilde\beta_{ij}\w $ given by $\tilde\beta_{ij}:=\Lambda_\beta(\Psi_1^i \Psi_2^j)$ $(0\leq i+j\leq 2d)$. \ We immediately check that $\Lambda_{\tilde \beta}(p) =\Lambda_\beta(p\circ \Psi)$ for every $p\in \mcal P_d$. \ 

\begin{proposition}\cite[cf. Proposition 1.7]{tcmp6} \label{deg-one}
\textup{(Invariance under degree-one transformations)}\  Let $\mcal 
{M}(d)$ and $\tilde{\mcal {M}}(d)$ be the moment matrices associated with $\beta$ and
$\tilde{\bt}$, resp., and let $J\hat{p}:=\widehat{p\circ\Psi}$ \textup{(}$p\in%
\mathcal{P}_{d}$\textup{).}\  The following statements hold: 
\begin{enumerate}[(i)]
\item  \label{lininv(1)}$\tilde{{\mcal M}}(d)=J^T{\mcal M}(d)J$;
\item  \label{lininv(2)}$J$ is invertible;
\item  \label{lininv(3)}$\tilde{\mcal{M}}(d)\geq0\Leftrightarrow {\mcal M}(d)\geq0$;
\item  \label{lininv(4)}$\rank\tilde{\mcal{M}}(d)=\rank{\mcal M}(d)$;
\item  \label{lininv(5)}The formula $\mu=\tilde{\mu}\circ\Psi$ establishes a
one-to-one correspondence between the sets of representing measures for $\bt$ and $\tilde{\bt}$, which preserves measure class and cardinality
of the support; moreover, $\varphi(\supp\mu )=\supp\tilde{\mu}$;
\item  \label{lininv(6)}$\mcal{M}\left( d\right) $ admits a flat extension if and
only if $\tilde{\mcal{M}}\left( d\right) $ admits a flat extension. \  
\end{enumerate}
\end{proposition}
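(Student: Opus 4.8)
The plan is to reduce all six assertions to the single identity expressing a moment matrix as the Gram matrix of its Riesz functional: for $p,q\in\mcal P_d$ one has $\langle\mcal M(d)\hat p,\hat q\rangle=\Lambda_\beta(pq)$, directly from the definition of $\mcal M(d)$ together with bilinearity, and likewise $\langle\tilde{\mcal M}(d)\hat p,\hat q\rangle=\Lambda_{\tilde\beta}(pq)$. First I would record that the identity $\Lambda_{\tilde\beta}(r)=\Lambda_\beta(r\circ\Psi)$ in fact holds for all $r\in\mcal P_{2d}$, not merely $\mcal P_d$, since it is just the defining relation $\tilde\beta_{ij}=\Lambda_\beta(\Psi_1^i\Psi_2^j)$ read on monomials and extended by linearity. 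Because $\Psi$ is affine, $p\circ\Psi\in\mcal P_d$ whenever $p\in\mcal P_d$, so $J\hat p=\widehat{p\circ\Psi}$ is a genuine linear endomorphism of the coefficient space, and $(pq)\circ\Psi=(p\circ\Psi)(q\circ\Psi)$. Chaining these, $\langle\tilde{\mcal M}(d)\hat p,\hat q\rangle=\Lambda_{\tilde\beta}(pq)=\Lambda_\beta((p\circ\Psi)(q\circ\Psi))=\langle\mcal M(d)J\hat p,J\hat q\rangle=\langle J^T\mcal M(d)J\hat p,\hat q\rangle$, and since $p,q$ are arbitrary this gives (i).

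For (ii), the hypothesis $bf\neq ce$ makes the linear part of $\Psi$ invertible, so $\Psi^{-1}$ is again a degree-one map and $\hat p\mapsto\widehat{p\circ\Psi^{-1}}$ is a two-sided inverse of $J$. Statements (iii) and (iv) then follow at once, since by (i) $\tilde{\mcal M}(d)=J^T\mcal M(d)J$ is a congruence by the invertible matrix $J$, which preserves positive semidefiniteness (Sylvester's law of inertia) and rank. For (v) I would use the pushforward measure: if $\mu$ represents $\beta$ then for $0\le i+j\le 2d$, $\int u^iv^j\,d(\Psi_*\mu)=\int\Psi_1^i\Psi_2^j\,d\mu=\Lambda_\beta(\Psi_1^i\Psi_2^j)=\tilde\beta_{ij}$, so $\Psi_*\mu$ represents $\tilde\beta$, and $\Psi^{-1}_*$ gives the reverse assignment. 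As $\Psi$ is an affine homeomorphism of $\re^2$, these are mutually inverse bijections carrying atoms to atoms, preserving total mass, measure class, and support cardinality, with $\supp(\Psi_*\mu)=\Psi(\supp\mu)$; writing this as $\mu=\tilde\mu\circ\Psi$ is simply the relation $\mu(F)=\tilde\mu(\Psi(F))$.

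For (vi) the natural move is to run the same construction one degree higher. Given a flat extension $\mcal M(d+1)$ of $\mcal M(d)$ with moment sequence $\beta^{(2d+2)}$, I would form $\tilde\beta^{(2d+2)}$ by the same rule and let $J_{d+1}$ be composition-with-$\Psi$ on $\mcal P_{d+1}$; then (i) and (iv) at level $d+1$ give $\tilde{\mcal M}(d+1)=J_{d+1}^T\mcal M(d+1)J_{d+1}$ and $\rank\tilde{\mcal M}(d+1)=\rank\mcal M(d+1)=\rank\mcal M(d)=\rank\tilde{\mcal M}(d)$, so that $\tilde{\mcal M}(d+1)$ is flat over $\tilde{\mcal M}(d)$; the converse is identical with $\Psi$ replaced by $\Psi^{-1}$. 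The step I expect to demand the most care is exactly this one: one must verify that $\tilde{\mcal M}(d+1)$ genuinely retains the Hankel-by-blocks moment structure and legitimately extends $\tilde{\mcal M}(d)$ --- equivalently, that $J_{d+1}$ is block triangular for the degree filtration and restricts to $J$ on $\mcal P_d$, with invertible top-degree block. This holds because the leading form of $\Psi_1^i\Psi_2^j$ equals $(bx+cy)^i(ex+fy)^j$, which is nonzero precisely under the hypothesis $bf\neq ce$. By contrast, once (i) is in hand, the remaining items are routine consequences of the congruence it provides.
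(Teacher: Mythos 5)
Your proposal is correct. The paper itself contains no proof of this proposition --- it is quoted, with notation adapted from the complex to the real setting, from \cite[Proposition 1.7]{tcmp6} --- and your argument, resting on the Riesz-functional identity $\langle \mcal{M}(d)\hat{p},\hat{q}\rangle=\Lambda_{\beta}(pq)$ and the resulting congruence $\tilde{\mcal{M}}(d)=J^{T}\mcal{M}(d)J$ by the invertible composition operator $J$ (applied again at level $d+1$ for the flat-extension statement, where you correctly check that the extended sequence $\tilde{\beta}^{(2d+2)}$ restricts to $\tilde{\beta}^{(2d)}$), is essentially the standard argument given in that reference.
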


We will now apply Proposition \ref{deg-one} to a cubic real binary moment sequence $\beta\equiv \beta^{(3)}: \q \beta_{00}, \beta_{10}, \beta_{01}, 
\beta_{20}, \beta_{11}, \beta_{02}, \beta_{30}, \beta_{21}, \beta_{12}, \beta_{03}\w$ with $\beta_{00}>0$. \  
Our strategy is to enlarge $\beta^{(3)}$ to $\beta^{(4)}$ by adding new undetermined moments of degree $4$; this extended finite sequence has an associated moment matrix $\mathcal{M}(2)$.  \ As we will see in the sequel, it is enough to consider the case when $\mathcal{M}(2)$ is ``normalized," that is, $\mathcal{M}(1)$ is the identity matrix.  \ For, the case when $\mathcal{M}(1)$ is singular can be dealt with easily using the results in \cite{tcmp1} and \cite{tcmp2}. \ We thus assume that $\beta_{00}=1$ and that the principal $2\times 2$ and $3\times 3$ minors of $\mathcal{M}(1)$, $d_2$ and $d_3$, resp., are strictly positive. \ A calculation using \textit{Mathematica} \cite{Wol} reveals that
\begin{eqnarray*}
d_2 &=&-\beta_{10}^2+\beta_{20}\\
d_3&=& -\beta_{02} \beta_{10}^2+2 \beta_{01} \beta_{10} \beta_{11}-\beta_{11}^2-\beta_{01}^2 \beta_{20}+\beta_{02} \beta_{20}. \ 
\end{eqnarray*}
Consider now the degree-one transformation
$$
\Psi(x,y) \equiv \left( a+bx + cy, d+ex+fy\right),
$$
where $ a:=\frac{\beta_{01}\beta_{20}-\beta_{10} \beta_{11}}{\sqrt{d_2 d_3}}$, $b:=\frac{\beta_{11}-\beta_{01} \beta_{10}}{\sqrt{d_2 d_3}}$, $c:=
- \sqrt{\frac{d_2}{d_3}}$ , $d:=-\frac{\beta_{10}}{\sqrt {d_2} }$, $e:= \frac{1}{\sqrt {d_2} }$, and $f:=0$. \  
Observe that
$$
bf-ce=- \sqrt{\frac{1}{d_3}}\neq0. \ 
$$
Through this transformation, and using \cite[Proposition 1.7]{tcmp6}, any positive semidefinite $\mcal M(2)$ with a nonsingular $\mcal M(1)$ can be translated to
\begin{equation}\label{e-nm1}
\begin{pmatrix}
1 & 0& 0 & 1 & 0 & 1 \\
0& 1& 0 & \tilde{\beta}_{30} & \tilde{\beta}_{21}& \tilde{\beta}_{12} \\
0 & 0 & 1 & \tilde{\beta}_{21} & \tilde{\beta}_{12} & \tilde{\beta}_{03} \\
 1 & \tilde{\beta}_{30} & \tilde{\beta}_{21} & \tilde{\beta}_{40} & \tilde{\beta}_{31} & \tilde{\beta}_{22} \\
0 & \tilde{\beta}_{21} & \tilde{\beta}_{12} & \tilde{\beta}_{31} & \tilde{\beta}_{22} & \tilde{\beta}_{13} \\
1 & \tilde{\beta}_{12} & \tilde{\beta}_{03} & \tilde{\beta}_{22} & \tilde{\beta}_{13} & \tilde{\beta}_{04}
\end{pmatrix} 
=: \mathcal{M}[a_0,a_1,a_2,a_3]
\end{equation}
where $a_i:=\tilde{\beta}_{3-i,i}$.

\subsection*{Recursively Determinate Moment Problems.} 

Our approach to the nonsingular cubic moment problem will require a key result from the theory of recursively determinate moment matrices \cite{tcmp13}, which we now briefly describe. \ We first recall that a moment matrix ${\mcal M}(d)$ is \textit{recursively determinate} if there are column dependence relations in ${\mcal M}(d)$ of the form
\begin{eqnarray} 
&&X^n = p(X,Y) ~~~ (p\in \mathcal P_{n-1}); \label{RDMP1}\\
&&Y^m = q(X,Y) ~~~(q\in \mathcal P_m, \; m \leq n, \; \textrm{and } q \textrm{ has no $y^m$ term}), \label{RDMP2}
\end{eqnarray}
where $\mathcal P_k$ denotes the subspace of polynomials in $\mathbb R[x,y]$ whose degree is less than or equal to $k$. \  
One of the main results in \cite{tcmp13} follows.

\begin{lemma} \ (\cite[Corollary 2.4]{tcmp13}, with $d=n=m=2$, so that $d=n+m-2$.) \label{lemmanew} 
Assume that $\mcal M(2)$ is positive semidefinite and admits column relations of the form (\ref{RDMP1}) and (\ref{RDMP2}), with $n=m=2$. \ Then $\mathcal{M}(2)$ admits a flat extension $\mcal M(3)$.
\end{lemma}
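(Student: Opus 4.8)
The plan is to build the flat extension $\mathcal{M}(3)$ by hand: I would use the two column relations to push the functional calculus up into degree $3$, thereby determining all the new moments, and then invoke Smul'jan's Theorem (Theorem \ref{smul}) to certify positivity and rank-preservation. The genuine work is not in producing the extension but in confirming that the extension retains the Hankel (moment-matrix) structure.

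First I would write the relations (\ref{RDMP1})--(\ref{RDMP2}) explicitly as
\[
X^2 = a_0 \mathbf{1} + a_1 X + a_2 Y, \qquad Y^2 = b_0 \mathbf{1} + b_1 X + b_2 Y + b_3 X^2 + b_4 XY,
\]
the second having no $Y^2$ term by hypothesis. Substituting the first relation into the second collapses both degree-$2$ columns onto the span of $\{\mathbf{1}, X, Y, XY\}$; hence $r := \rank \mathcal{M}(2) \le 4$ and one may extract a basis $\mathcal{B} \subseteq \{\mathbf{1},X,Y,XY\}$ for the column space $\mathcal{C}_{\mathcal{M}(2)}$.

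Next I would construct the candidate $\mathcal{M}(3) = \begin{pmatrix} \mathcal{M}(2) & B \\ B^{*} & C \end{pmatrix}$. The four new columns are defined by multiplying the relations by $X$ and $Y$, namely $X^3 = Xp$, $X^2Y = Yp$, $XY^2 = Xq$, $Y^3 = Yq$, and then recursively reducing each right-hand side to the basis $\mathcal{B}$: for instance $X^3 = Xp$ is already of degree $\le 2$, while $XY^2 = Xq$ involves the columns $X^3$ and $X^2Y$, which have themselves just been reduced. This yields a matrix $W$ with $B = \mathcal{M}(2)\,W$, after which I set $C := W^{*}\mathcal{M}(2)W$. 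By Smul'jan's Theorem, $\mathcal{M}(3) \ge 0$ and $\rank \mathcal{M}(3) = \rank \mathcal{M}(2)$, so the extension is flat, \emph{provided} it is genuinely a moment matrix.

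The hard part is exactly that last proviso: verifying that $B$ and $C$ are Hankel, i.e. that the recursively defined degree-$5$ and degree-$6$ moments are well defined. Several monomials are reachable by more than one recursion — $X^2Y$ arises both as $Y\cdot X^2$ and as $X\cdot XY$, and $XY^2$ both as $X\cdot Y^2$ and as $Y\cdot XY$ — and the resulting entries must agree. The consistency of those entries that overlap with the already-given moments of $\mathcal{M}(2)$ (degrees $\le 4$) follows by evaluating the original column relations row-by-row inside $\mathcal{M}(2)$; the more delicate consistency among the genuinely new moments is precisely where the hypotheses $m \le n$ and ``no $y^m$ term in $q$'' are used, since they guarantee that the reduction of $Y^2$ does not feed back inconsistently when multiplied by $X$ and $Y$. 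Marshalling and discharging all of these overlap identities is the substance of \cite[Corollary 2.4]{tcmp13} specialized to $d = n = m = 2$; I would either invoke that corollary directly or, in this low-degree setting, verify the finitely many identities by hand.
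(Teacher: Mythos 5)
Your proposal is correct and ultimately takes the same route as the paper: the paper gives no independent proof of this lemma at all, but simply quotes \cite[Corollary 2.4]{tcmp13} specialized to $d=n=m=2$, which is precisely your fallback option of invoking that corollary directly. Your constructive sketch (reduce the degree-$3$ columns via the two relations, set $C:=W^{*}\mathcal{M}(2)W$, apply Smul'jan, and check the Hankel/overlap identities) is a fair outline of the machinery behind that corollary --- and closely parallels what the paper later carries out by hand in Case 2 of Theorem \ref{firstmain} --- but since you defer the consistency identities either to the citation or to an unexecuted verification, it functions as an outline of, not a substitute for, the cited result.
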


We recall that, in general, the solubility of a quartic moment problem requires the variety condition. \ However, Lemma \ref{lemmanew} says that the variety condition is superfluous if a positive semidefinite $\mcal M(2)$ with invertible $\mathcal{M}(1)$ has only two column relations $X^2=p(X,Y)$ and $Y^2=q(X,Y)$, where $p$ and $q$ are linear polynomials. \ In such a case, $\mcal M(2)$ has a flat extension $\mathcal{M}(3)$, and therefore a $4$-atomic representing measure. \ It follows that the pair of equations $x^2=p(x,y)$ and $y^2=q(x,y)$ has exactly $4$ common real roots. \


\section{Cubic Binary Moment Problems}\label{s-cmp}

As we have indicated before, the nontrivial cases of the cubic binary moment problem arise when the submatrix  $\mcal M(1)$ of $\beta^{(3)}$ is nonsingular. \ Moreover, as noted in \cite{Kim14} the positive semidefiniteness of $\mcal M(1)$ is always a necessary condition for the existence of a representing measure. \ Thus, in the sequel we focus on cubic binary moment problems with $\mcal M(1)$ positive definite. \ When this happens we say that $\beta^{(3)}$ is a \ti{nonsingular} cubic binary moment sequence. \

\subsection*{Main Results}

Using the degree-one transformation introduced in Section \ref{s-prel}, if  $\beta^{(3)}$ is a nonsingular cubic binary moment sequence we may always assume, without loss of generality,  that $\beta^{(3)}: \q 1,0,0,1,0,1,a_0,a_1,a_2,a_3\w$ and we may write 
\begin{equation}
\mathcal M{(2)}:=\left(
\begin{array}{cccccc}
 1 & 0 & 0 & 1 & 0 & 1 \\
 0 & 1 & 0 & {a_0} & {a_1} & {a_2} \\
 0 & 0 & 1 & {a_1} & {a_2} & {a_3} \\
 1 & {a_0} & {a_1} & \beta_{40} & \beta_{31} & \beta_{22} \\
 0 & {a_1} & {a_2} & \beta_{31} & \beta_{22} & \beta_{13} \\
 1 & {a_2} & {a_3} & \beta_{22} & \beta_{13} & \beta_{04}
\end{array}
\right),
\end{equation}
where $ \beta_{40}, \beta_{31}, \beta_{22}, \beta_{13}$ and $ \beta_{04}$ are undetermined new moments. \  
We will prove that the extended $\beta^{(4)}$ obtained from $\beta^{(3)}$ by adding the quartic moments $\beta_{40}$, $\beta_{31}$, $\beta_{22}$, 
$\beta_{13}$ and $\beta_{04}$ admits a representing measure, for appropriate choices of the new moments; as a result, $\beta^{(3)}$ also admits a representing measure $\mu$. \ The smallest cardinality of $\opn{supp} \mu$ will be $3$ in some cases, and $4$ in others.

Using Theorem \ref{smul}, we first determine under what conditions the extended matrix $\mathcal{M}(2)$ will be a flat extension of $\mathcal{M}(1)$. \ First, to ensure the positive semidefiniteness of $\mathcal{M}(2)$, and if we let $W:=B(2)$ (the upper right-hand $3\times3$ block of $\mathcal{M}(2)$), we see that $C(2)$ (the lower right-hand $3\times3$ block of $\mathcal{M}(2)$) must satisfy the inequality $C(2) \ge W^T \mcal M(1) W$, with equality characterizing flatness. \ Now,  
\begin{equation}\label{e-C2}
W^T (\mcal M(1))^{-1} W=W^TW= 
\begin{pmatrix}
1+a_0^2+a_1^2 &	a_0 a_1+a_1 a_2&	1+a_0 a_2+a_1 a_3\\
a_0 a_1+a_1 a_2&	a_1^2+a_2^2	& a_1 a_2+a_2 a_3\\
1+a_0 a_2+a_1 a_3&	a_1 a_2+a_2 a_3	&1+a_2^2+a_3^2
\end{pmatrix}.
\end{equation}
Consequently, $\mathcal M{(2)}$ is a flat extension of $\mcal M{(1)}$ if and only if 
\begin{eqnarray}
\beta_{40} &=& 1+a_0^2+a_1^2 \label{eq33} \\ 
\beta_{31} &=& a_0 a_1+a_1 a_2 \label{eq34} \\
\beta_{13} &=& a_1 a_2+a_2 a_3 \label{eq35} \\
\beta_{04} &=& 1+a_2^2+a_3^2, \; \textrm{and} \label{eq36} \\
k &:=& (1+a_0 a_2+a_1 a_3) - (a_1^2+a_2^2)=0. \label{eq37}
\end{eqnarray}
Condition (\ref{eq37}) is equivalent to the commutativity of the matrices defined in \cite{Kim14}. \ 

We are now ready to prove our first result.
 
\begin{theorem} \label{zeromain}
Let $\beta^{(3)}$ be a nonsingular cubic binary moment sequence, let $k$ be as in (\ref{eq37}) and assume that $k=0$. \ Then $\beta^{(3)}$ admits a $3$-atomic representing measure.
\end{theorem}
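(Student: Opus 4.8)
The plan is to choose the five undetermined quartic moments so that $\mcal M(2)$ becomes a flat extension of $\mcal M(1)$, and then to quote the Flat Extension Theorem. Because $\beta^{(3)}$ is nonsingular, the degree-one transformation of Section \ref{s-prel} normalizes the data so that $\mcal M(1)$ is the identity and $\mcal M(2)=\mcal M[a_0,a_1,a_2,a_3]$ as in (\ref{e-nm1}); by Proposition \ref{deg-one}(\ref{lininv(5)}) a $3$-atomic representing measure in this frame transports back to a $3$-atomic representing measure for the original $\beta^{(3)}$, so it suffices to work with the normalized matrix.

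First I would set $\beta_{40},\beta_{31},\beta_{13},\beta_{04}$ by (\ref{eq33})--(\ref{eq36}) and put $\beta_{22}:=a_1^2+a_2^2$. With $W:=B(2)$ (the upper right-hand $3\times3$ block), the candidate lower right-hand block is then exactly the matrix $W^TW$ displayed in (\ref{e-C2}). Here the hypothesis is crucial: the block $C(2)$ must be Hankel, which forces its $(1,3)$ and $(2,2)$ entries to coincide, i.e. $1+a_0a_2+a_1a_3=a_1^2+a_2^2$; this is precisely the assumption $k=0$ of (\ref{eq37}). Thus $k=0$ is exactly what allows the single value $\beta_{22}$ to serve simultaneously as both entries, making $C(2)=W^TW$ a legitimate moment-matrix block. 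This structural compatibility is the only real content of the argument, and I expect it to be the main point to get right; when $k\neq0$ the natural candidate $W^TW$ is not Hankel, no flat extension of $\mcal M(1)$ exists, and one is forced into the rank-$4$ analysis treated elsewhere in the paper.

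With these choices, I would apply Smul'jan's Theorem (Theorem \ref{smul}) to $A:=\mcal M(1)=I$, $B:=W$, and $C:=C(2)=W^TW$. The three conditions hold, with equality in the last one: $A\geq0$, $B=AW$, and $C=W^TAW$. Hence $\mcal M(2)\geq0$ and $\rank\mcal M(2)=\rank\mcal M(1)=3$, so $\mcal M(2)$ is a flat extension of $\mcal M(1)$. Finally, the Flat Extension Theorem \cite[Theorem 1.5]{tcmp3} provides a unique $3$-atomic representing measure for the normalized $\beta^{(4)}$, which in particular represents the normalized $\beta^{(3)}$; transporting it back through $\Psi$ via Proposition \ref{deg-one}(\ref{lininv(5)}) yields the desired $3$-atomic representing measure for $\beta^{(3)}$. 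The only routine check remaining is the positive semidefiniteness of $\mcal M(2)$, which is automatic from the equality $C(2)=W^TW$, so no further variety or consistency condition needs to be verified.
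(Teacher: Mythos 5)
Your proposal is correct and takes essentially the same route as the paper: define $\beta_{40},\beta_{31},\beta_{13},\beta_{04}$ by (\ref{eq33})--(\ref{eq36}), use the hypothesis $k=0$ to make the single value $\beta_{22}=a_1^2+a_2^2=1+a_0a_2+a_1a_3$ compatible with the Hankel structure of $C(2)$, and conclude that $\mathcal{M}(2)$ is a flat extension of $\mathcal{M}(1)$, hence admits a $3$-atomic representing measure. Your write-up simply makes explicit the steps the paper leaves implicit (the Smul'jan verification with $A=I$, the invocation of the Flat Extension Theorem, and the transport back through the degree-one transformation), so it is a faithful, slightly more detailed version of the same argument.
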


\begin{proof} From the discussion preceding the statement of Theorem \ref{zeromain}, the new quartic moments $\beta_{40}$, $\beta_{31}$, $\beta_{13}$ and $\beta_{04}$ must be defined using (\ref{eq33}) -- (\ref{eq37}), resp. \ As for $\beta_{22}$, we must use $1+a_0 a_2+a_1 a_3$, which in this case equals $a_1^2+a_2^2$, because $k=0$. \ With these definitions, we easily conclude that $\mathcal{M}(2)$ is a flat moment matrix extension of $\mathcal{M}(1)$, which gives the desired result.
\end{proof}

When $k\neq 0$, it is not possible to select new quartic moments so that $\mcal M(2)$ is a flat extension of $\mathcal{M}(1)$. \ Therefore, any positive semidefinite moment matrix extension $\mathcal{M}(2)$ will satisfy $\rank \mcal M(2)\geq 4$. \ Nevertheless, the following theorem shows that it is always possible to choose a set of quartic moments such that $\rank \mcal M(2) =4$. \ Once those moments have been appropriately chosen, the extended moment matrix $\mathcal{M}(2)$ will admit a flat extension $\mathcal{M}(3)$, and therefore a $4$-atomic representing measure for $\beta^{(4)}$, which is also a representing measure for the initial data sequence $\beta^{(3)}$.

\begin{theorem} \label{firstmain}
Let $\beta^{(3)}$ be a nonsingular cubic binary moment sequence, let $k$ be as in (\ref{eq37}) and assume that $k \ne 0$. \ Then $\beta^{(3)}$ admits a $4$-atomic representing measure.
\end{theorem}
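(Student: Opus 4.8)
The plan is to handle the case $k \neq 0$ by choosing the new quartic moments so that the resulting $\mcal M(2)$ is positive semidefinite of rank exactly $4$, with precisely two column relations of the form $X^2 = p(X,Y)$ and $Y^2 = q(X,Y)$ for linear $p,q$, and then invoke Lemma \ref{lemmanew} to obtain a flat extension $\mcal M(3)$ and hence a $4$-atomic representing measure. Since Lemma \ref{lemmanew} dispenses with the variety condition, the entire burden is to produce a positive semidefinite extension of the right rank admitting exactly these two recursive-determinacy relations.

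First I would set up the target column relations. Because $\mcal M(1)$ is the identity, the columns $\emph{1}, X, Y$ are linearly independent; for rank $4$ we need $X^2, XY, Y^2$ to contribute exactly one new independent column, so two of them must be expressible in terms of $\{\emph{1}, X, Y\}$ and the third. The natural choice, matching (\ref{RDMP1})--(\ref{RDMP2}) with $n=m=2$, is to force $X^2 = \alpha \emph{1} + \beta X + \gamma Y$ and $Y^2 = \delta \emph{1} + \varepsilon X + \zeta Y + \eta (XY)$, keeping $XY$ as the single new basis column. Reading off the first three rows of $\mcal M(2)$ (which are fixed by the data, since they involve only moments up to degree $3$) pins down the coefficients $\alpha,\ldots,\eta$ in terms of $a_0,a_1,a_2,a_3$: the relation $X^2 = \alpha\emph{1} + \beta X + \gamma Y$ read against rows $\emph{1}, X, Y$ gives $\alpha = 1$, $\beta = a_0$, $\gamma = a_1$, and similarly the $Y^2$ relation yields its coefficients. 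These relations then dictate the entries $\beta_{40}, \beta_{31}, \beta_{22}, \beta_{13}, \beta_{04}$ via the Hankel/moment-matrix structure, i.e. by propagating the column dependence into the lower $C(2)$ block.

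Next I would verify consistency and positivity. The two imposed relations overdetermine some quartic entries (for instance $\beta_{22}$ is constrained both through the $X^2$ relation applied to column $Y^2$ and through the $Y^2$ relation applied to column $X^2$), so I must check these assignments are mutually compatible given only $k \neq 0$ — this is where the hypothesis $k \neq 0$ should enter, guaranteeing that the single remaining free direction ($XY$) genuinely adds a fourth independent column rather than collapsing back to rank $3$. Having fixed all five quartic moments, I would confirm that the resulting $\mcal M(2)$ is positive semidefinite of rank $4$; by Smul'jan's Theorem (Theorem \ref{smul}) this reduces to checking $C(2) = W^T \mcal M(1) W + (\text{rank-one correction})\ge 0$, equivalently that the $4\times 4$ compression onto $\{\emph{1},X,Y,XY\}$ is positive definite and the remaining two columns lie in its range. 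With rank $4$, positivity, and the two relations (\ref{RDMP1})--(\ref{RDMP2}) in hand, Lemma \ref{lemmanew} delivers the flat extension $\mcal M(3)$, whose variety is exactly $4$ points, producing the $4$-atomic measure.

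The main obstacle is the simultaneous-solvability step: I must show that a genuine positive semidefinite, rank-$4$ moment matrix \emph{of the correct Hankel form} exists for \emph{every} nonsingular $\beta^{(3)}$ with $k \neq 0$, not merely that the column relations can be written down formally. The difficulty is that forcing both $X^2$ and $Y^2$ into the span of $\{\emph{1},X,Y,XY\}$ imposes several overlapping constraints on the Hankel block $C(2)$, and one must confirm these never conflict and never force negativity. I expect the argument to hinge on an explicit (Mathematica-assisted) computation showing that, under $k\neq 0$, the free parameter governing the $XY$-column can be chosen so that the relevant $4\times 4$ leading minor stays strictly positive while the two dependent columns are absorbed; I would also double-check via Proposition \ref{deg-one} that the normalization $\mcal M(1)=I$ costs no generality, so the conclusion transfers back to the original $\beta^{(3)}$.
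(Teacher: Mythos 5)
Your construction only works when $k>0$; it cannot be carried out when $k<0$, so the proposal has a genuine gap covering half of the cases. The problem is structural, not computational. Any relation of the form (\ref{RDMP1}) with $n=2$ forces $X^2$ to be a linear combination of $\emph{1},X,Y$, and reading that relation against the first three rows gives $X^2=\emph{1}+a_0X+a_1Y$; propagating it into the rows $X^2$, $XY$, $Y^2$ then pins down $\beta_{40}=1+a_0^2+a_1^2$, $\beta_{31}=a_0a_1+a_1a_2$ and, crucially, $\beta_{22}=1+a_0a_2+a_1a_3$, independently of your free parameter $\eta$ (your consistency check confirms exactly this: the $\eta$-terms cancel). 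But then the principal minor of $\mathcal{M}(2)$ on rows and columns $\emph{1},X,Y,XY$ equals
\begin{equation*}
\det\begin{pmatrix} 1 & 0 & 0 & 0 \\ 0 & 1 & 0 & a_1 \\ 0 & 0 & 1 & a_2 \\ 0 & a_1 & a_2 & \beta_{22}\end{pmatrix}
=\beta_{22}-a_1^2-a_2^2=k,
\end{equation*}
so positive semidefiniteness of $\mathcal{M}(2)$ forces $k\geq 0$. When $k<0$, no choice of $\eta$ (or of $\beta_{13},\beta_{04}$) can help: the matrix is indefinite, not merely rank-deficient. Your diagnosis that $k\neq 0$ is only needed to prevent the $XY$ column from ``collapsing back to rank $3$'' misreads the role of the sign of $k$; the hypothesis your argument actually uses is $k>0$. (By the symmetry of $\mathcal{M}(2)$, exchanging the roles of $X$ and $Y$ does not help either, since a linear relation for $Y^2$ pins $\beta_{22}$ at the same value.)

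For $k>0$ your argument is essentially the paper's Case 1: the paper takes $\eta=0$, i.e.\ $Y^2=\emph{1}+a_2X+a_3Y$, and observes that $C(2)-W^TW$ is then zero except for the $(2,2)$ entry, which equals $k>0$, giving positivity, rank $4$, and the hypotheses of Lemma \ref{lemmanew}. For $k<0$ the paper must abandon Lemma \ref{lemmanew} altogether. It makes $X^2$ the \emph{independent} fourth column (setting $\beta_{40}:=2+a_1^2+a_2^2$) and arranges the relations $XY=a_1X+a_2Y$ and $Y^2=p_1\emph{1}+p_2X+p_3Y+p_4X^2$ with $p_4=-k\neq 0$, as in (\ref{eqxy}) and (\ref{eqy2}). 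These relations are not of the form (\ref{RDMP1})--(\ref{RDMP2}) --- there is no linear expression for $X^2$ --- so the recursive-determinacy lemma is unavailable, and the paper instead builds the flat extension $\mathcal{M}(3)$ by hand: the cubic columns are defined through (RG), and compatibility of the two possible definitions of $XY^2$ (which uses $p_4\neq 0$) produces the column relation (\ref{eqx3}) for $X^3$, yielding flatness and hence a $4$-atomic measure. An argument of this second kind is what your proposal is missing for $k<0$.
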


\begin{proof} We will divide the proof into two cases: $k>0$ and $k<0$.

\textbf{Case 1} ($k>0$). \ As in the Proof of Theorem \ref{zeromain}, let $\beta_{40}$, $\beta_{31}$, $\beta_{13}$ and $\beta_{04}$ be given by (\ref{eq33}) -- (\ref{eq37}), resp. \ Since $k>0$, the positivity of $\mathcal{M}(2)$ will be preserved if we let $\beta_{22}:=1 + a_1 a_3 + a_2 a_4$. \ With this choice of $\beta_{22}$, the proposed extended matrix $\mathcal{M}(2)$ will be a positive semidefinite moment matrix, and such that the block $C(2)$ differs from $W^T (\mathcal{M}(1))^{-1}W$ in just the $(2,2)$-entry. \ As a result, $\rank \mcal M(2)=4$. \ A simple calculation now reveals that 
\begin{eqnarray*}
X^2 &=& \ti 1+a_0 X + a_1 Y \\
Y^2 &=& \ti 1 + a_2 X + a_3 Y.
\end{eqnarray*}
We now know that $\mathcal{M}(2)$ is positive semidefinite and recursively determinate, and by Lemma \ref{lemmanew}, $\mathcal{M}(2)$ admits a $4$-atomic representing measure; it follows that $\beta^{(3)}$ also admits a $4$-atomic representing measure.

\textbf{Case 2} ($k<0$). \ Here our strategy is to allow the rank to increase as we transition from $\mathcal{M}(1)$ to the compression of $\mathcal{M}(2)$ to the first $4$ rows and columns. \ This requires making the column $X^2$ \emph{linearly independent} of the columns $\emph{1}$, $X,$ and $Y$ in $\mathcal{M}(2)$. \ It is straightforward to observe that this can be easily accomplished by letting 
$$
\beta_{40} := 2 + a_1^2 +a_2^2.
$$
With this definition in hand, we now postulate that $\mathcal{M}(2)$ be a flat extension of its compression to the first $4$ rows and columns. \ A calculation using \emph{Mathematica} reveals that one can accomplish this by defining three of the remaining quartic moments as follows:
\begin{eqnarray*}
\beta_{31} &:= &a_1 a_2 + a_2 a_3,\\
\beta_{22} &:=& a_2^2  +a_3^2 ,\\
 \beta_{13} &:=& a_2 a_3 + a_3 a_4.
\end{eqnarray*}
Having chosen these moments, we now use Theorem \ref{smul} to determine the remaining quartic moment, $\beta_{04}$. \ Since we wish to make $\mathcal{M}(2)$ a flat extension of its above mentioned compression, a calculation using \emph{Mathematica} immediately yields
\begin{eqnarray} \label{b04}
\beta_{04} &=& 2+a_1^4+2 a_0 a_2+a_0^2 a_2^2+2 a_1^2 a_2^2+a_2^4+2 a_1 a_3+2 a_0 a_1 a_2 a_3+a_3^2+a_1^2 a_3^2 \nonumber \\
&& -2 a_1^2-2 a_0 a_1^2 a_2-a_2^2-2 a_0 a_2^3-2 a_1^3 a_3-2 a_1 a_2^2 a_3.
\end{eqnarray}
As a result, in $\mathcal{M}(2)$ we now have
\begin{equation}
XY=a_1X+a_2Y \label{eqxy}
\end{equation}
and
\begin{equation}
Y^2=p_1 1+p_2 X+p_3 Y+p_4 X^2 \label{eqy2}
\end{equation}
for suitable real scalars $p_1,p_2,p_3,p_4$ (which depend upon $a_0,a_1,a_2,a_3$); moreover, $p_4=-k$. \ We will now build a flat moment matrix extension $\mathcal{M}(3)$ of $\mathcal{M}(2)$. \ This will prove that $\mathcal{M}(2)$ admits a $4$-atomic representing measure; a fortiori, $\beta^{(3)}$ also admits a $4$-atomic representing measure, just as in Case 1 above.

To define $\mathcal{M}(3)$, we aim to preserve the property (RG). \ First, we observe that the columns $1$, $X$ and $Y$ in $\mathcal{M}(3)$ are obtained from those columns in $\mathcal{M}(2)$ by adding suitable cubic and quartic moments. \ Moreover, the columns $XY$ and $Y^2$ are defined using (\ref{eqxy}) and (\ref{eqy2}), while the columns $X^2Y$, $XY^2$ and $Y^3$ are obtained, via the functional calculus, from (\ref{eqxy}) and (\ref{eqy2}). \ For instance, 
$$
X^2Y:=a_1X^2+a_2XY=a_1X^2+a_2(a_1X+a_2Y)=a_2a_1X+a_2^2Y+a_1X^2
$$
and
\begin{eqnarray*}
Y^3&:=&p_1 Y+p_2 XY+p_3 Y^2+p_4 X^2Y \\
&=&p_1 Y+p_2 (a_1X+a_2Y)+p_3 (p_1 1+p_2 X+p_3 Y+p_4 X^2) \\
&&+p_4 (a_2a_1X+a_2^2Y+a_1X^2).
\end{eqnarray*}
It follows that both $X^2Y$ and $Y^3$ are linear combinations of the columns $1$, $X$, $Y$ and $X^2$. \ Now, to define $XY^2$ one can use either (\ref{eqxy}) or (\ref{eqy2}).\ However, property (RG) requires that both definitions of $XY^2$ be compatible. \ In other words, the expressions
\begin{eqnarray*}
XY^2 &\equiv& a_1XY+a_2Y^2=a_2p_1 1+(a_1^2+a_2p_2)X+(a_1a_2+a_2p_3)Y+a_2p_4X^2 \\ 
&& (\textrm{obtained using (\ref{eqxy})})
\end{eqnarray*}
and
\begin{eqnarray*}
XY^2 &\equiv& p_1X+p_2X^2+p_3XY+p_4X^3=(p_1+a_1p_3)X+a_2p_3Y+p_2X^2+p_4X^3 \\
&& (\textrm{obtained using (\ref{eqy2})})
\end{eqnarray*}
must be identical. \ Since $p_4=-k\ne0$, we immediately get
\begin{equation} \label{eqx3}
X^3=\frac{1}{p_4}[a_2p_1 1+(a_1^2+a_2p_2-p_1-a_1p_3)X+a_1a_2Y+(a_2p_4-p_2)X^2],  
\end{equation}
which we can then use to define the column $X^3$. \ Close examination of (\ref{eqx3}) at the level of the fourth row in $\mathcal{M}(3)$ leads to a formula for the quintic moment $\beta_{50}$. \ This value must then be inserted in the seventh row of $X^2$, to complete the definition of $X^2$ in $\mathcal{M}(3)$. As a result, in the new moment matrix $\mathcal{M}(3)$ we have exhibited each cubic column as a linear combination of columns associated with monomials of degree at most $2$. \ This means that $\mathcal{M}(3)$ is a flat extension of $\mathcal{M}(2)$, as desired. \ The proof is now complete.
\end{proof} 

\begin{remark}
The quartic moment $\beta_{04}$ defined by (\ref{b04}), is nonnegative, being a diagonal entry of the positive semidefinite matrix $\mathcal{M}(2)$. \ One can say more, however, by appealing to the theory of Semidefinite Programming (SDP). \ As is well known, a polynomial $f\in \mcal P_{2d}$ is a sum of squares if and only if $f=\tb z^T Q \tb z$ for some square matrix $Q\geq 0$, where $\tb z$ is the vector of monomials of degree less than or equal to $d$. \  If we let $f \equiv f(a_0,a_1,a_2,a_3):=\beta_{04}-1$ and $\tb y:= (1,a_2,a_3,a_1^2,a_2^2,a_0 a_2,a_1 a_3)$, a calculation using \emph{Mathematica} reveals that $f \ge 0$ if and only if $\tb y^T R \tb y \ge 0$, where 
\begin{equation}
R:=  
\left(
\begin{array}{ccccccc}
 1 & 0 & 0 & -1 & -1 & 1 & 1 \\
 0 & 1 & 0 & 0 & 0 & 0 & 0 \\
 0 & 0 & 1 & 0 & 0 & 0 & 0 \\
 -1 & 0 & 0 & 1 & 1 & -1 & -1 \\
 -1 & 0 & 0 & 1 & 1 & -1 & -1 \\
 1 & 0 & 0 & -1 & -1 & 1 & 1 \\
 1 & 0 & 0 & -1 & -1 & 1 & 1
\end{array}
\right).
\end{equation}
Since $R$ is a flat extension of its $3\times3$ compression to the first three rows and columns, it is clear that $R \ge 0$. \ It follow that $\beta_{04} \equiv f+1 \ge 1 > 0$.
\end{remark}

\textit{Acknowledgment}.  \ Some of the proofs in this paper were obtained using calculations with the software tool \textit{Mathematica} \cite{Wol}. \

\bibliographystyle{plain}

\end{document}